\newtheorem{theorem}{Theorem}
\newtheorem{definition}[theorem]{Definition}
\newtheorem{lemma}[theorem]{Lemma}
\newenvironment{proof}[1][Proof]{\noindent\textbf{#1.} }{\ \rule{0.5em}{0.5em}}
\begin{document}

\author{Miko\l aj Pep\l o\'{n}ski}
\title{On the existence and multiplicity of solutions for discrete higher
order BVP}
\maketitle

\begin{abstract}
We investigate the existence and multiplicity of solutions for higher order
discrete boundary value problems via critical point theory.
\end{abstract}

\section{Introduction}

\qquad In this paper we consider the discrete boundary value problem of
order $2n$ with Dirichlet type boundary conditions, namely we investigate

\begin{center}
\begin{equation}
\begin{array}{c}
{\small \Delta }^{n}{\small (p(k-n)\Delta }^{n}{\small x(k-n))+(-1)}^{n+1}%
{\small f(k,x(k))=0}\text{ for\ }{\small k\in Z[1,N]} \\ 
\\ 
{\small x(i)=0}\text{ for }{\small i\in Z[1-n,0]\cup Z[N+1,N+n]}%
\end{array}
\label{rownanie wyjsciowe}
\end{equation}
\end{center}

\bigskip 

where $N\geq 2$; $f\in C(Z[1,N]\times 
%TCIMACRO{\U{211d} }%
%BeginExpansion
\mathbb{R}
%EndExpansion
,%
%TCIMACRO{\U{211d} }%
%BeginExpansion
\mathbb{R}
%EndExpansion
)$; $p(i)\in 
%TCIMACRO{\U{211d} }%
%BeginExpansion
\mathbb{R}
%EndExpansion
$ for $i=1-n,...,N$; $\Delta $ denotes the forward difference operator $%
\Delta x(i)=x(i+1)-x(i)$; for any $a,b\in 
%TCIMACRO{\U{2124} }%
%BeginExpansion
\mathbb{Z}
%EndExpansion
$ we put $\ Z[a,b]=\{a,a+1,...b-1,b\}$. Problem (\ref{rownanie wyjsciowe})
as given above is in a variational form so that we may apply the critical
point theory in order to reach the existence of solutions for which we use
the direct variational method. In order to get the multiplicity of solutions
we combine the mountain pass methodology with the direct variational
approach.

Discrete boundary value problems have attracted a lot of attention recently.
The boundary value problems connected with discrete equations can be tackled
with almost similar methods as their continuous counterparts. The
variational techniques applied for discrete problems include, among others,
the mountain pass methodology, the linking theorem, the Morse theory, the
three critical point, compare with \cite{agrawal}, \cite{caiYu}, \cite%
{TianZeng}, \cite{zhangcheng}, \cite{nonzero}. Moreover, the fixed point
approach is in fact much more prolific in the case of discrete problem, see
for example \cite{FPAgrawal}, \cite{FPYangPing}. However the results
concerning the higher order problems are rather scarce, see \cite{Sol2n}.
Mostly in the literature the second and fourth order problems are considered.

\section{Variational framework}

Solutions are obtained in the space 
\begin{equation*}
E=\{x:Z[1-n,N+n]\rightarrow 
%TCIMACRO{\U{211d} }%
%BeginExpansion
\mathbb{R}
%EndExpansion
|(\forall i\in Z[1-n,0]\cup Z[N+1,N+n])(x(i)=0)\}
\end{equation*}

\bigskip considered with a norm%
\begin{equation*}
||x||=\left( \dsum\limits_{k=1}^{N}x(k)^{2}\right) ^{\frac{1}{2}}.
\end{equation*}

All functions form $E$ are defined on a finite set, and therefore these are
continuous. The space $E$ can be also considered with the following norm%
\begin{equation}
||x||_{q}=\left( \dsum\limits_{k=1}^{N}|x(k)|^{q}\right) ^{\frac{1}{q}}
\label{norma_r}
\end{equation}%
where $q\geq 1$. Since $E$ has finite dimension any norms $||.||_{1},$ $%
||.||_{2}$ are equivalent, i.e. there exist constants $c,C$ such that
inequality 
\begin{equation*}
c||x||_{1}\leq ||x||_{2}\leq C||x||_{1}
\end{equation*}%
holds for all $x\in E$.

The action functional which we use $J:E\rightarrow 
%TCIMACRO{\U{211d} }%
%BeginExpansion
\mathbb{R}
%EndExpansion
$ reads 
\begin{equation*}
J(x)=\dsum\limits_{k=1-n}^{N}\left[ \frac{1}{2}p(k)(\Delta
^{n}x(k))^{2}-F(k,x(k))\right]
\end{equation*}%
where $F(k,s)=\dint\limits_{0}^{s}f(k,t)dt$. Critical points of $J$ are in
fact solution to (\ref{rownanie wyjsciowe}) and in turn solutions to (\ref%
{rownanie wyjsciowe}) are precisely critical points to $J$. The solutions
which we investigate are the strong ones. This is in contrast to the
infinite dimensional case, when the critical point theory allows usually for
obtaining weak solutions.

\begin{lemma}
$J$ is continously differentiable in the sense of Gateux; $x_{0}\in E$ is a
solution to (\ref{rownanie wyjsciowe}) if and only if it is critical point
to $J$.
\end{lemma}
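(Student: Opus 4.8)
The plan is to first settle the regularity of $J$ by a direct computation of its Gateaux derivative, and then to use discrete integration by parts (summation by parts) to recast the resulting expression into a form from which the equivalence with (\ref{rownanie wyjsciowe}) can simply be read off.

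First I would fix $x,v\in E$ and differentiate $t\mapsto J(x+tv)$ at $t=0$. Since $E$ is finite dimensional and $F(k,\cdot)$ is $C^{1}$ with $\partial_{s}F(k,s)=f(k,s)$ (because $f$ is continuous, so $F$ is a $C^{1}$ antiderivative in its second argument), the difference quotient converges and yields
\[
J'(x)(v)=\sum_{k=1-n}^{N}p(k)\,\Delta^{n}x(k)\,\Delta^{n}v(k)-\sum_{k=1}^{N}f(k,x(k))\,v(k),
\]
where the second sum runs only over $Z[1,N]$ because $v$ vanishes on the boundary sets. The map $v\mapsto J'(x)(v)$ is linear, and $x\mapsto J'(x)$ is continuous, since $f$ is continuous and the first sum depends linearly, hence continuously, on $\Delta^{n}x$. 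This establishes continuous Gateaux differentiability; in finite dimension this already amounts to $J\in C^{1}$, so nothing beyond the computation is at stake in this part of the claim.

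The substantive step is to apply the summation-by-parts identity
\[
\sum_{k=a}^{b}u(k)\,\Delta w(k)=\big[u(k)w(k)\big]_{a}^{b+1}-\sum_{k=a}^{b}w(k+1)\,\Delta u(k)
\]
a total of $n$ times to the first sum, each time transferring one factor $\Delta$ off $v$ and onto $p\,\Delta^{n}x$. Each application contributes a sign $(-1)$ and generates boundary terms. The decisive observation is that the Dirichlet-type conditions prescribe $n$ consecutive vanishing values of both $x$ and $v$ at each end, namely on $Z[1-n,0]$ and on $Z[N+1,N+n]$, which is exactly enough to annihilate every boundary contribution produced along the way (each such term, after the index shifts, is evaluated at one of the prescribed zeros). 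After the $n$ steps one is left with
\[
J'(x)(v)=\sum_{k=1}^{N}\Big[(-1)^{n}\Delta^{n}\big(p(k-n)\Delta^{n}x(k-n)\big)-f(k,x(k))\Big]v(k).
\]

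Finally I would read off the equivalence. If $x_{0}$ is a critical point then $J'(x_{0})(v)=0$ for all $v\in E$; testing against the coordinate functions $e_{k}$ (which lie in $E$) forces the bracket to vanish for each $k\in Z[1,N]$, and multiplying by $(-1)^{n}$ turns this into precisely (\ref{rownanie wyjsciowe}). Conversely, if $x_{0}$ solves (\ref{rownanie wyjsciowe}) the bracket is identically zero, so $J'(x_{0})(v)=0$ for every $v$, i.e. $x_{0}$ is critical. The only genuine obstacle is the bookkeeping in the iterated summation by parts: one must track the index shifts carefully and verify that each boundary term lands on one of the prescribed zero values, and this is where I would concentrate the effort.
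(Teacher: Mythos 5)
Your proposal follows essentially the same route as the paper: compute the Gateaux derivative along a direction, apply summation by parts $n$ times while using the $n$ prescribed zero values at each end to kill the boundary terms, arrive at $(-1)^{n}\sum_{k}\Delta^{n}\bigl(p(k-n)\Delta^{n}x(k-n)\bigr)v(k)$, and read off the equivalence by testing against coordinate vectors. The argument is correct and the bookkeeping you flag is exactly the content of the paper's own computation.
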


\begin{proof}
Fix $x\in E$ and chose an arbitrary direction $h\in E$. Let us define an
auxiliary function $\varphi _{x}:%
%TCIMACRO{\U{211d} }%
%BeginExpansion
\mathbb{R}
%EndExpansion
\rightarrow 
%TCIMACRO{\U{211d} }%
%BeginExpansion
\mathbb{R}
%EndExpansion
$ by 
\begin{equation*}
\varphi _{x}(\varepsilon )=J(x+\varepsilon h)=\dsum\limits_{k=1-n}^{N}\left[ 
\frac{1}{2}p(k)(\Delta ^{n}x(k)+\Delta ^{n}(\varepsilon
h)(k))^{2}-F(k,x(k)+(\varepsilon h)(k))\right] .
\end{equation*}

Its derivative at $0$ is equal to derivative of $J$ at point $x$ and
direction $h$%
\begin{eqnarray*}
\varphi _{x}^{\prime }(0) &=&p(k)\dsum\limits_{k=1-n}^{N}\Delta
^{n}x(k)\Delta ^{n}h(k)-\dsum\limits_{k=1-n}^{N}f(k,x(k))h(k)= \\
&&\dsum\limits_{k=1-n}^{N}\left( p(k)\Delta ^{n}x(k)\Delta
^{n}h(k)-f(k,x(k))h(k)\right) .
\end{eqnarray*}%
Using summation by parts formula, we observe that%
\begin{equation*}
\begin{array}{l}
\dsum\limits_{k=1-n}^{N}p(k)\Delta ^{n}x(k)\Delta ^{n}h(k)= \\ 
\lbrack p(k-1)\Delta ^{n}x(k-1)\Delta
^{n-1}h(k)]_{1-n}^{N+1}-\dsum\limits_{k=1-n}^{N}\Delta \left( p(k-1)\Delta
^{n}x(k-1)\right) \Delta ^{n-1}h(k)= \\ 
-\dsum\limits_{k=1-n}^{N}\Delta \left( p(k-1)\Delta ^{n}x(k-1)\right) \Delta
^{n-1}h(k)= \\ 
-\left( [\Delta \left( p(k-2)\Delta ^{n}x(k-2)\right) \Delta
^{n-2}h(k)]_{1-n}^{N+1}-\dsum\limits_{k=1-n}^{N}\Delta ^{2}\left(
p(k-2)\Delta ^{n}x(k-2)\right) \Delta ^{n-2}h(k)\right) = \\ 
\dsum\limits_{k=1-n}^{N}\Delta ^{2}\left( p(k-2)\Delta ^{n}x(k-2)\right)
\Delta ^{n-2}h(k)=(-1)^{n}\dsum\limits_{k=1-n}^{N}\Delta ^{n}\left(
p(k-n)\Delta ^{n}x(k-n)\right) h(k).%
\end{array}%
\end{equation*}%
Therefore $J$ is of class $C^{1}$. Thus 
\begin{equation*}
\varphi _{x}^{\prime }(0)=\dsum\limits_{k=1-n}^{N}\left( (-1)^{n}\Delta
^{n}\left( p(k-n)\Delta ^{n}x(k-n)\right) h(k)-f(k,x(k))h(k)\right)
=(J^{\prime }(x),h)
\end{equation*}%
and $\varphi _{x}^{\prime }(0)=0$ provides that%
\begin{equation*}
\dsum\limits_{k=1-n}^{N}\left[ \Delta ^{n}\left( p(k-n)\Delta
^{n}x(k-n)\right) h(k)+(-1)^{n+1}f(k,x(k))h(k)\right] =0.
\end{equation*}%
Thus the second assertion follows.
\end{proof}

\section{Auxiliary results}

Now we provide some results which will be used in the sequel. Let us recall
some definitions and lemmas.

\begin{definition}
(\cite{mawh})Let $H$ be a Banach space. We say that functional\ $%
J:H\rightarrow 
%TCIMACRO{\U{211d} }%
%BeginExpansion
\mathbb{R}
%EndExpansion
$ is coercive on $H$, if $\lim_{||x||\rightarrow \infty }J(x)=+\infty $,
where $||.||$ stands for norm in $H$.
\end{definition}

\begin{definition}
(\cite{mawh})Let $J\in C^{1}(H,%
%TCIMACRO{\U{211d} }%
%BeginExpansion
\mathbb{R}
%EndExpansion
)$, where $H$ is real Banach space. If for any sequence $\{u_{n}\}\subset H$%
, such that $\{J(u_{n})\}$ is bounded and $J^{\prime }(u_{n})\rightarrow 0$
as $n\rightarrow \infty $, $(u_{n})$ possesses a convergent subsequence,
then we say that $J$ satisfies Palais -Smale condition or (PS) condition for
short.
\end{definition}

In search for critical points we will use the following lemmas:

\begin{lemma}
(\cite{mawh}) If the functional $J:H\rightarrow R$, is continous and
coercive, then there exists $x_{0}\in H$ such that $\inf_{x\in
H}J(x)=J(x_{0})$. If $J$ is Gateaux differentable at $x_{0}$, $J^{\prime
}(x_{0})=\theta $.
\end{lemma}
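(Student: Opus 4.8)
The plan is to use the direct method of the calculus of variations, treating the two assertions separately.

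For the existence of a minimizer, I would put $m=\inf_{x\in H}J(x)$. Since $J(0)\in\mathbb{R}$ we have $m<+\infty$. Coercivity lets me confine attention to a bounded set: choosing a minimizing sequence $\{x_k\}$ with $J(x_k)\to m$ and, after discarding finitely many terms, $J(x_k)\le J(0)+1$, the relation $\lim_{\|x\|\to\infty}J(x)=+\infty$ forces $\{x_k\}$ to lie in some closed ball $\bar B(0,R)$, because outside a large enough ball $J$ exceeds $J(0)+1$. The key step is then to extract a convergent subsequence $x_{k_j}\to x_0$; in the finite-dimensional setting of $E$ this is immediate from the Bolzano--Weierstrass theorem, the ball $\bar B(0,R)$ being compact. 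Continuity of $J$ finally gives $J(x_0)=\lim_j J(x_{k_j})=m$, so the infimum is attained at $x_0$, and is in particular finite.

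For the vanishing of the derivative, I would fix an arbitrary direction $h\in H$ and consider the scalar function $g(t)=J(x_0+th)$. Since $x_0$ minimizes $J$, the function $g$ has a minimum at $t=0$; since $J$ is Gateaux differentiable at $x_0$, $g$ is differentiable at $0$ with $g'(0)=(J'(x_0),h)$. A differentiable function of one real variable can attain an interior minimum only where its derivative vanishes, so $g'(0)=0$, that is $(J'(x_0),h)=0$. As $h$ was arbitrary, $J'(x_0)=\theta$.

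The step I expect to be the real obstacle is the extraction of the convergent subsequence. In a general Banach space, continuity and coercivity alone do not suffice: one has to replace continuity by weak lower semicontinuity and assume $H$ reflexive, so that a bounded minimizing sequence has a weakly convergent subsequence and the semicontinuity transfers the value $m$ to the limit. Here this difficulty evaporates, precisely because the space $E$ in which the result will be used is finite-dimensional, where closed bounded sets are compact and norm-continuity already yields the needed lower semicontinuity; this is the setting in which we invoke the lemma.
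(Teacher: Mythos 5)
Your argument is correct, but note that the paper does not prove this lemma at all: it is quoted from \cite{mawh} and used as a black box, so there is no in-paper proof to compare against. What you supply is the standard direct-method argument — coercivity traps a minimizing sequence in a closed ball, Bolzano--Weierstrass extracts a convergent subsequence, continuity passes the infimum to the limit, and the one-variable Fermat argument along each direction $h$ kills the Gateaux derivative — and it is complete and correct in the setting where the lemma is actually invoked, namely the finite-dimensional space $E$. You are also right to flag that the statement as literally written, for an arbitrary Banach space $H$, does not follow from continuity and coercivity alone: one needs, for instance, reflexivity together with sequential weak lower semicontinuity to run the same scheme with weak convergence, or else finite dimensionality so that bounded closed sets are compact. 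That caveat is a genuine imprecision in the quoted lemma which the paper glosses over, and your observation that the difficulty evaporates in $E$ is exactly the right way to close it. One small point worth making explicit: the attainment $J(x_0)=m$ also shows $m>-\infty$, i.e.\ the infimum is finite, which you do note and which is needed for the statement $\inf_{x\in H}J(x)=J(x_0)$ to make sense as an equality of real numbers.
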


\begin{lemma}
(\cite{mawh})(Mountain-pass lemma) Let $H$ be a real Banach space, and $J\in
C^{1}(H,%
%TCIMACRO{\U{211d} }%
%BeginExpansion
\mathbb{R}
%EndExpansion
)$ satisfy the (PS) condition. Assume that $x_{0},x_{1}\in H$ and $%
%TCIMACRO{\U{3a9} }%
%BeginExpansion
\Omega
%EndExpansion
\subset H$ is an open set such that $x_{0}\in \Omega $, but $x_{1}\notin
\Omega $. If $max\{J(x_{0}),J(x_{1})\}<inf_{x\in \partial 
%TCIMACRO{\U{3a9} }%
%BeginExpansion
\Omega
%EndExpansion
}J(x)$ , then $c=\inf_{h\in \Gamma }\max_{t\in \lbrack 0,1]}J(h(t))$ is the
critical value of $J$, where 
\begin{equation*}
\Gamma =\{h|h:[0,1]\rightarrow H,\text{h is continuous}%
,h(0)=x_{0},h(1)=x_{1}\}
\end{equation*}%
It means that there exist $x^{\ast }\in H$ such that $J(x^{\ast })=c$ and $%
J^{\prime }(x^{\ast })=\theta $.
\end{lemma}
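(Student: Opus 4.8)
The plan is to show that $c$ is a critical value by the classical minimax argument, reasoning by contradiction through a deformation of sublevel sets. First I would check that $c$ is a well-defined real number with $c\geq\inf_{x\in\partial\Omega}J(x)$. The family $\Gamma$ is nonempty, since the segment $t\mapsto(1-t)x_{0}+tx_{1}$ belongs to it, so $c$ is finite. For any $h\in\Gamma$ the path is continuous and joins $x_{0}\in\Omega$ to $x_{1}$, which lies outside $\overline{\Omega}$ (the hypothesis $J(x_{1})<\inf_{\partial\Omega}J$ forces $x_{1}\notin\partial\Omega$, and $x_{1}\notin\Omega$ by assumption). Since $h^{-1}(\Omega)$ and $h^{-1}(H\setminus\overline{\Omega})$ are disjoint open subsets of the connected interval $[0,1]$ that cannot cover it, there is $t^{\ast}$ with $h(t^{\ast})\in\partial\Omega$, whence $\max_{t}J(h(t))\geq\inf_{\partial\Omega}J$. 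Taking the infimum over $h\in\Gamma$ gives $c\geq\inf_{\partial\Omega}J>\max\{J(x_{0}),J(x_{1})\}$.

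The heart of the proof is a deformation lemma built on the (PS) condition. Suppose, for contradiction, that $c$ is not a critical value, i.e. $J^{\prime}(x)\neq\theta$ for every $x$ in the level set $K_{c}=\{x:J(x)=c\}$. Then, using the (PS) condition together with the continuity of $J^{\prime}$, one first shows that $\|J^{\prime}\|$ is bounded below by some $\delta>0$ on a strip $\{x:|J(x)-c|\leq 2\varepsilon\}$ for a suitably small $\varepsilon>0$; otherwise one could extract a sequence along which $J$ is bounded and $J^{\prime}\to\theta$, and (PS) would produce a critical point at level $c$, a contradiction. On this strip one then constructs a locally Lipschitz pseudo-gradient vector field and follows its suitably truncated, normalized negative flow to obtain a continuous deformation $\eta:[0,1]\times H\to H$ with $\eta(1,\cdot)$ mapping $J^{c+\varepsilon}=\{x:J(x)\leq c+\varepsilon\}$ into $J^{c-\varepsilon}$, while $\eta(1,x)=x$ whenever $J(x)\leq c-2\varepsilon$. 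In particular $\eta(1,\cdot)$ fixes $x_{0}$ and $x_{1}$, whose values lie strictly below $c$.

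To finish, I would choose an almost-optimal path: by the definition of $c$ there is $h\in\Gamma$ with $\max_{t}J(h(t))<c+\varepsilon$. Then $\tilde{h}:=\eta(1,h(\cdot))$ is continuous and, since $\eta(1,\cdot)$ fixes the endpoints, still joins $x_{0}$ to $x_{1}$, so $\tilde{h}\in\Gamma$. But the deformation property gives $\max_{t}J(\tilde{h}(t))\leq c-\varepsilon<c$, contradicting the fact that $c$ is the infimum over $\Gamma$. Hence $K_{c}$ must contain a point $x^{\ast}$ with $J(x^{\ast})=c$ and $J^{\prime}(x^{\ast})=\theta$.

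The main obstacle is the deformation lemma itself, specifically the construction of the (pseudo-)gradient flow in a general Banach space and the verification that (PS) genuinely forces $\|J^{\prime}\|$ away from zero near the level $c$; the crossing argument and the concluding contradiction are then routine. In the Hilbert-space setting relevant to this paper, where $H$ is finite dimensional, the construction simplifies considerably, since the negative gradient flow of $J$ can be used directly in place of an abstract pseudo-gradient field.
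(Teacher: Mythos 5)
The paper does not prove this lemma at all: it is quoted from Mawhin's monograph as a known tool, so there is no in-paper argument to compare yours against. Judged on its own merits, your sketch is the standard and correct minimax proof. The first paragraph (every path in $\Gamma$ meets $\partial\Omega$, hence $c\geq\inf_{\partial\Omega}J>\max\{J(x_{0}),J(x_{1})\}$, and $c<+\infty$ because the segment lies in $\Gamma$) is complete, including the observation that the strict inequality on $J(x_{1})$ rules out $x_{1}\in\partial\Omega$ and so forces $x_{1}\notin\overline{\Omega}$. The reduction to the quantitative deformation lemma is the right strategy, and your derivation of a uniform lower bound $\|J^{\prime}\|\geq\delta$ on a strip around level $c$ from (PS) is exactly how that hypothesis enters. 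Two points you should make explicit if this were to be written out in full: (i) $\varepsilon$ must be chosen with $2\varepsilon<c-\max\{J(x_{0}),J(x_{1})\}$ so that the deformation genuinely fixes both endpoints and $\tilde{h}$ stays in $\Gamma$; (ii) the deformation lemma itself (pseudo-gradient field, cut-off, well-posedness and continuity of the truncated flow) is a substantial result that you only invoke, so the proof is a correct outline rather than self-contained --- which is acceptable here, since the paper itself treats the whole lemma as a black box. Your closing remark that in the finite-dimensional setting of the paper one can use the actual negative gradient flow is accurate and is a genuine simplification over the general Banach-space case.
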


\bigskip We may also use mountain pass lemma in following variant

\begin{lemma}
Let $H$ be a real Banach space, and $J\in C^{1}(H,%
%TCIMACRO{\U{211d} }%
%BeginExpansion
\mathbb{R}
%EndExpansion
)$ satisfy the (PS) condition. Assume that $x_{0},x_{1}\in H$ and $%
%TCIMACRO{\U{3a9} }%
%BeginExpansion
\Omega
%EndExpansion
\subset H$ is an open set such that $x_{0}\in \Omega $, but $x_{1}\notin
\Omega $. If $\min \{J(x_{0}),J(x_{1})\}>\sup_{x\in \partial \Omega }J(x)$ ,
then $c=\sup_{h\in \Gamma }\min_{t\in \lbrack 0,1]}J(h(t))$ is the critical
value of $J$. It means that there exist $x^{\ast }\in H$ such that $%
J(x^{\ast })=c$ and $J^{\prime }(x^{\ast })=\theta $.
\end{lemma}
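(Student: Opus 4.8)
The plan is to derive this "dual" mountain pass lemma directly from the standard mountain pass lemma (the preceding Lemma) by applying it to the functional $-J$. First I would verify that $-J$ inherits all the hypotheses needed to invoke the standard version. Since $J\in C^{1}(H,\mathbb{R})$, clearly $-J\in C^{1}(H,\mathbb{R})$ with $(-J)^{\prime }=-J^{\prime }$; a sequence $\{u_n\}$ with $\{-J(u_n)\}$ bounded and $(-J)^{\prime }(u_n)\to 0$ is exactly a sequence with $\{J(u_n)\}$ bounded and $J^{\prime }(u_n)\to 0$, so $-J$ satisfies (PS) precisely because $J$ does. The points $x_0,x_1$ and the open set $\Omega$ are the same.

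The key translation step is the geometric hypothesis. Multiplying the assumed inequality $\min\{J(x_0),J(x_1)\}>\sup_{x\in\partial\Omega}J(x)$ by $-1$ reverses it into
\begin{equation*}
\max\{-J(x_0),-J(x_1)\}<\inf_{x\in\partial\Omega}(-J(x)),
\end{equation*}
using that $-\min\{a,b\}=\max\{-a,-b\}$ and $-\sup_{x}(\,\cdot\,)=\inf_{x}(-\,\cdot\,)$. This is exactly the mountain pass geometry for $-J$. Hence the standard Lemma applies to $-J$ and yields a critical value
\begin{equation*}
\tilde{c}=\inf_{h\in\Gamma}\max_{t\in[0,1]}(-J)(h(t)),
\end{equation*}
with the same path class $\Gamma$, together with a point $x^{\ast}$ such that $(-J)(x^{\ast})=\tilde{c}$ and $(-J)^{\prime }(x^{\ast})=\theta$.

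Finally I would translate the conclusion back to $J$. Since $(-J)^{\prime }(x^{\ast})=\theta$ is equivalent to $J^{\prime }(x^{\ast})=\theta$, the point $x^{\ast}$ is a critical point of $J$ as well. For the critical value, interchanging the sign inside the inf/max converts it into a sup/min: because $\max_{t}(-J)(h(t))=-\min_{t}J(h(t))$ and then $\inf_{h}(-\,\cdot\,)=-\sup_{h}(\,\cdot\,)$, we get $\tilde{c}=-\,c$ where $c=\sup_{h\in\Gamma}\min_{t\in[0,1]}J(h(t))$. Thus $J(x^{\ast})=-\tilde{c}=c$, which is the asserted critical value.

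The argument is essentially a bookkeeping exercise in sign reversals, so there is no genuine analytic obstacle; the only point demanding care is to be systematic about the order-reversing identities $-\min=\max(-\,\cdot\,)$, $-\sup=\inf(-\,\cdot\,)$ and their composition, and to confirm that the path class $\Gamma$ is unchanged under $J\mapsto -J$ since it depends only on the fixed endpoints $x_0,x_1$. Once those identities are applied consistently, the result follows immediately from the previous Lemma.
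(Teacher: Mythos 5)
Your proposal is correct and follows exactly the route the paper itself indicates: the paper justifies this lemma with the one-line remark that $J$ satisfies the hypotheses of the standard mountain pass lemma if and only if $-J$ satisfies those of this variant, with mutually corresponding critical points. You have simply carried out that sign-reversal bookkeeping in full detail, which is a faithful (and more complete) rendering of the same argument.
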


since functional $J$ satisfies assumptions of mountain pass lemma if and
only if $-J$ satisfies assumptions of its second variant, and since critical
points of $J$ and $-J$ are mutually corresponing.

We will also use the following inequality from \cite{Sol2n}

\begin{equation*}
\lambda ||x||^{2}\leq \dsum\limits_{k=1-n}^{N}(\Delta ^{n}x(k))^{2}\leq
4^{n}||x||^{2}
\end{equation*}

\section{Case of nonnegative $p$}

We know that the direct method of the calculus of variations can be
summarized as follows: given a continously differetable coercive functional
we know that it has at least one critical point. Since we know that our
functional is already of class $C^{1}$, it suffice to provide conditions
which will guarantee the coercivity or anty-coercivity of $J.$ In case of
finite dimensional setting we can employ either coercivity or
anty-coercivity of the action functional. Thus we have the following
sequence of lemmas.

\begin{lemma}
\label{lemma_coercive}We assume either that\newline
\textbf{A1} there exists a constant $\alpha >0$, such that $xf(k,x)\leq 0$
for all $k\in Z[1,N]$, and $\left\vert x\right\vert >\alpha $. \newline
or that\newline
\textbf{A2.1} there exist constants $M\geq 0,\alpha <\frac{1}{2}\lambda \min
p(k)$ such that for all $|u|>M$ it holds $F(k,u)\leq \alpha u^{2}$,\newline
or that\newline
\textbf{A2.2} there exist constants $M\geq 0,\alpha \in 
%TCIMACRO{\U{211d} }%
%BeginExpansion
\mathbb{R}
%EndExpansion
$, $1\leq $ $q<2$ such that for all $|u|>M$ it holds $F(k,u)\leq \alpha
|u|^{q}$. \newline
Then $J$ is coercive and problem (\ref{rownanie wyjsciowe}) has at least one
solution.

\begin{proof}
By \textbf{A1} we see that $x\rightarrow F(k,x)$ are bounded from the above
for $k=1,...,N$. Indeed, fix $k\in Z[1,N]$. We observe that $f(k,x)\leq 0$
for $x>\alpha $. Thus for $x>\alpha $ function $x\rightarrow F(k,x)$ is
nonincreasing. Hence $\limsup\nolimits_{x\rightarrow +\infty }F(k,x)<+\infty
.$ In order to prove the boundness for $x<-\alpha $ we put $h(k,x)=f(k,-x)$
and repeat our reasoning. Since for $x\in \lbrack -\alpha ,\alpha ]$,
function $x\rightarrow F(k,x)$ being continuous is bounded we see that there
exists a constant $m>0$ such that%
\begin{equation*}
\begin{array}{l}
J(x)=\frac{1}{2}\min\limits_{k\in
Z[1-n,N]}p(k)\dsum\limits_{k=1-n}^{N}(\Delta
^{n}x(k))^{2}-\dsum\limits_{k=1-n}^{N}F(k,x(k))\geq \bigskip  \\ 
\frac{1}{2}\lambda \min\limits_{k\in
Z[1-n,N]}p(k)||x||^{2}-\dsum\limits_{k=1-n}^{N}F(k,x(k))\geq \frac{1}{2}%
\lambda \min\limits_{k\in Z[1-n,N]}p(k)||x||^{2}-m.%
\end{array}%
\end{equation*}%
Thereofore $J$ is coercive and so problem (\ref{rownanie wyjsciowe}) has a
solution.

Assuming either \textbf{A2.1} or \textbf{A2.2} we see that%
\begin{equation*}
\begin{array}{l}
\dsum\limits_{k=1-n}^{N}\left[ \frac{1}{2}p(k)(\Delta ^{n}x(k))^{2}-F(k,x(k))%
\right] \geq \dsum\limits_{k=1-n}^{N}\left[ \frac{1}{2}\min p(k)(\Delta
^{n}x(k))^{2}-F(k,x(k))\right] = \\ 
\frac{1}{2}\min p(k)\dsum\limits_{k=1-n}^{N}(\Delta
^{n}x(k))^{2}-\dsum\limits_{k=1-n}^{N}F(k,x(k))\geq  \\ 
\frac{1}{2}\min p(k)\dsum\limits_{k=1-n}^{N}(\Delta
^{n}x(k))^{2}-\dsum\limits_{k=1-n}^{N}\alpha |x(k)|^{q}=\frac{1}{2}\min
p(k)\lambda ||x||^{2}-\alpha ||x||_{q}^{q}\geq  \\ 
\frac{1}{2}\min p(k)\lambda ||x||^{2}-\alpha C^{q}||x||^{q}%
\end{array}%
\end{equation*}%
Where $||.||_{q}$ is defined by (\ref{norma_r}) and $C$ is constant such
that for every $x\in E$ inequality $||x||_{q}\leq C||x||$ holds. When $1\leq
q<2$ we see that the expression 
\begin{equation*}
\frac{1}{2}\min p(k)\lambda ||x||^{2}-\alpha C^{q}||x||^{q}
\end{equation*}%
approaches $+\infty $ as $||x||\rightarrow \infty $. When $q=2$ and $\alpha <%
\frac{1}{2}\lambda \min p(k)$, we see that 
\begin{equation*}
(\frac{1}{2}\min p(k)\lambda -\alpha )||x||^{2}\rightarrow +\infty 
\end{equation*}%
since $(\frac{1}{2}\min p(k)\lambda -\alpha )>0.$
\end{proof}
\end{lemma}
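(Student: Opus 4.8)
The plan is to establish coercivity of $J$ under each of the three alternative hypotheses, since the existence of a solution then follows immediately by combining Lemma~\ref{lemma_coercive}'s earlier companion lemma (the direct method: a continuous coercive $C^1$ functional on a Banach space attains its infimum at a critical point). The functional splits as
\begin{equation*}
J(x)=\frac{1}{2}\dsum\limits_{k=1-n}^{N}p(k)(\Delta^{n}x(k))^{2}-\dsum\limits_{k=1-n}^{N}F(k,x(k)),
\end{equation*}
so the quadratic part is controlled from below using $p(k)\geq\min_k p(k)>0$ together with the eigenvalue inequality $\lambda\|x\|^{2}\leq\sum(\Delta^{n}x(k))^{2}$ quoted from \cite{Sol2n}. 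The entire task reduces to bounding the potential sum $\sum F(k,x(k))$ from above and checking that it is dominated by the quadratic term as $\|x\|\to\infty$.

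\textbf{Step 1 (hypothesis A1).} I would first fix $k$ and show $s\mapsto F(k,s)$ is bounded above on $\mathbb{R}$. The sign condition $xf(k,x)\leq 0$ for $|x|>\alpha$ forces $f(k,s)\leq 0$ for $s>\alpha$ and $f(k,s)\geq 0$ for $s<-\alpha$, so $F$ is nonincreasing on $(\alpha,\infty)$ and nondecreasing on $(-\infty,-\alpha)$; continuity handles the compact middle interval $[-\alpha,\alpha]$. Since $Z[1,N]$ is finite, taking the maximum over $k$ of these uniform bounds yields a single constant $m$ with $\sum_k F(k,x(k))\leq m$ for all $x$. Then $J(x)\geq\frac{1}{2}\lambda(\min_k p(k))\|x\|^{2}-m\to+\infty$.

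\textbf{Step 2 (hypotheses A2.1 and A2.2).} Here the growth bound $F(k,u)\leq\alpha|u|^{q}$ (for $|u|>M$, with $q=2$ in A2.1 and $1\leq q<2$ in A2.2) gives, after absorbing the bounded contribution from $|u|\leq M$ into a constant, an estimate of the form $\sum_k F(k,x(k))\leq\alpha\|x\|_q^{q}+\text{const}$. I would then invoke norm equivalence on the finite-dimensional space $E$ to replace $\|x\|_q$ by $C\|x\|$, obtaining $J(x)\geq\frac{1}{2}\lambda(\min_k p(k))\|x\|^{2}-\alpha C^{q}\|x\|^{q}-\text{const}$. For $q<2$ the quadratic term dominates regardless of the sign of $\alpha$; for $q=2$ coercivity survives precisely because the coefficient $\frac{1}{2}\lambda\min p(k)-\alpha C^{2}$ is positive, which is guaranteed by the strict bound $\alpha<\frac{1}{2}\lambda\min p(k)$.

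\textbf{The main obstacle} is bookkeeping rather than depth: one must be careful that A2.1 and A2.2 only control $F$ for $|u|>M$, so the genuinely necessary step is to handle the compact region $|u|\leq M$ by a separate continuity-and-finiteness argument and fold it into the additive constant, and in the $q=2$ borderline case to track the constants $\lambda$ and $C$ explicitly so the leading coefficient is verifiably positive. In all three cases the conclusion is the same: $J$ is coercive, and by the direct method it attains a minimum which, by the first lemma of the paper, is a critical point and hence a solution of (\ref{rownanie wyjsciowe}).
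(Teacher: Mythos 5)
Your proposal is correct and follows essentially the same route as the paper: bound the quadratic part below via $\lambda\|x\|^{2}\leq\sum(\Delta^{n}x(k))^{2}$ and $\min_k p(k)$, bound $\sum F(k,x(k))$ above (by a constant under \textbf{A1}, by $\alpha C^{q}\|x\|^{q}$ plus a constant under \textbf{A2.1}/\textbf{A2.2}), and conclude coercivity and hence existence by the direct method. You are in fact slightly more careful than the paper in explicitly absorbing the region $|u|\leq M$ into an additive constant; the only point to watch is that in the borderline case $q=2$ one should note $\|x\|_{2}=\|x\|$ (so $C=1$) for the coefficient $\frac{1}{2}\lambda\min p(k)-\alpha$ to be the one that must be positive.
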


The existence results we can also get in case when the action functional is
anti-coercive. Indeed, we have the following result similar in spirit to
Lemma \ref{lemma_coercive}.

\begin{lemma}
\label{lemma_anticoercive}We assume either that\newline
\textbf{A3.1} there exist constants $M\geq 0,\alpha >\frac{1}{2}\max
p(k)4^{n}$ such that for all $|u|>M$ it holds $F(k,u)\geq \alpha u^{2}$%
\newline
or that\newline
\textbf{A3.2} there exist constants $M\geq 0,\alpha >0$, $q>2$ such that for
all $|u|>M$ it holds $F(k,u)\geq \alpha |u|^{q}$.\newline
Then $J$ is anti-coercive and problem (\ref{rownanie wyjsciowe}) has at
least one solution.
\end{lemma}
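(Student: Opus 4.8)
The plan is to establish that $J$ is anti-coercive, i.e.\ $J(x)\to-\infty$ as $\|x\|\to\infty$, and then to obtain a solution by applying the direct-method lemma quoted above to $-J$. The whole argument mirrors the proof of Lemma~\ref{lemma_coercive} with the inequalities reversed, so the two ingredients are the recalled two-sided bound $\lambda\|x\|^{2}\leq\sum_{k=1-n}^{N}(\Delta^{n}x(k))^{2}\leq 4^{n}\|x\|^{2}$ and the promotion of the growth hypotheses on $F$ (assumed only for $|u|>M$) to estimates valid for every $u$.

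First I would bound the elastic term from above. Because $p$ is nonnegative and $(\Delta^{n}x(k))^{2}\geq 0$, we have $\sum_{k=1-n}^{N}\tfrac{1}{2}p(k)(\Delta^{n}x(k))^{2}\leq \tfrac{1}{2}\max p(k)\sum_{k=1-n}^{N}(\Delta^{n}x(k))^{2}\leq \tfrac{1}{2}4^{n}\max p(k)\,\|x\|^{2}$, the final step using the upper half of the recalled inequality (valid since $\max p(k)\geq 0$). Next I would globalize the condition on $F$. Under \textbf{A3.1} the map $(k,u)\mapsto F(k,u)-\alpha u^{2}$ is continuous, hence bounded below on the compact set $Z[1,N]\times[-M,M]$, and it is nonnegative for $|u|>M$; thus there is a constant $M_{1}\geq 0$ with $F(k,u)\geq \alpha u^{2}-M_{1}$ for all $k$ and $u$. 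Since $x$ vanishes on $Z[1-n,0]$, summing yields $\sum_{k=1-n}^{N}F(k,x(k))\geq \alpha\|x\|^{2}-M_{2}$ for a constant $M_{2}$. Under \textbf{A3.2} the identical argument gives $F(k,u)\geq \alpha|u|^{q}-M_{1}'$ and hence $\sum_{k=1-n}^{N}F(k,x(k))\geq \alpha\|x\|_{q}^{q}-M_{2}'$.

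Combining the two bounds, under \textbf{A3.1} I get $J(x)\leq\bigl(\tfrac{1}{2}4^{n}\max p(k)-\alpha\bigr)\|x\|^{2}+M_{2}$, whose leading coefficient is strictly negative precisely because $\alpha>\tfrac{1}{2}4^{n}\max p(k)$; hence $J(x)\to-\infty$. Under \textbf{A3.2}, choosing $c>0$ with $\|x\|_{q}\geq c\|x\|$ from the equivalence of norms, I get $J(x)\leq\tfrac{1}{2}4^{n}\max p(k)\,\|x\|^{2}-\alpha c^{q}\|x\|^{q}+M_{2}'$, and since $q>2$ the term of order $\|x\|^{q}$ dominates, so again $J(x)\to-\infty$. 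In both cases $J$ is anti-coercive.

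To conclude, $-J$ is continuous and coercive, so by the direct-method lemma it attains its infimum at some $x_{0}\in E$; equivalently $J$ attains its supremum at $x_{0}$. As $-J$ is Gateaux differentiable by the opening lemma, $(-J)^{\prime}(x_{0})=\theta$, whence $J^{\prime}(x_{0})=\theta$, and the opening lemma identifies $x_{0}$ as a solution of (\ref{rownanie wyjsciowe}). I expect the only real subtlety, exactly as in Lemma~\ref{lemma_coercive}, to be the sign bookkeeping: the $p$-term must be controlled by $\max p(k)$ together with the $4^{n}$ (not the $\lambda$) estimate, which is what makes the threshold $\tfrac{1}{2}4^{n}\max p(k)$ in \textbf{A3.1} the sharp condition rendering the decisive coefficient negative.
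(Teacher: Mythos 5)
Your proof follows essentially the same route as the paper's: bound the elastic term by $\frac{1}{2}\max p(k)\,4^{n}\|x\|^{2}$ via the recalled inequality, use the growth hypothesis on $F$ to dominate the remaining sum, and conclude anti-coercivity, then minimize $-J$. In fact you are slightly more careful than the paper, which applies $F(k,u)\geq\alpha|u|^{q}$ as if it held for all $u$ rather than only for $|u|>M$; your globalization step with the additive constant $M_{1}$ correctly fills that small gap.
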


\begin{proof}
Note that%
\begin{equation*}
\begin{array}{l}
J(x)\leq \ \frac{1}{2}\max
p(k)4^{n}||x||^{2}-\dsum\limits_{k=1-n}^{N}F(k,x(k))\leq  \\ 
\frac{1}{2}\max p(k)4^{n}||x||^{2}-\alpha \dsum\limits_{k=1-n}^{N}|x(k)|^{q}=%
\frac{1}{2}\max p(k)4^{n}||x||^{2}-\alpha (||x||_{q})^{q}\leq  \\ 
\\ 
\frac{1}{2}\max p(k)4^{n}||x||^{2}-\alpha c^{q}|||x||^{q}.%
\end{array}%
\end{equation*}

If $q>2$ we see that 
\begin{equation*}
\frac{1}{2}\max p(k)4^{n}||x||^{2}-\alpha c^{q}|||x||^{q}
\end{equation*}%
approaches $-\infty $ as $||x||\rightarrow \infty $ since $\alpha \geq 0$.
When $q=2$ and $\alpha >\frac{1}{2}\max p(k)4^{n}$ we see that

\begin{equation*}
J(x)\leq \ \frac{1}{2}\max p(k)\cdot 4^{n}\cdot
||x||^{2}-\dsum\limits_{k=1-n}^{N}F(k,x(k))\leq (\frac{1}{2}\max p(k)\cdot
4^{n}-\alpha )||x||^{2}\rightarrow -\infty
\end{equation*}
\end{proof}

\section{\protect\bigskip Case of p non-positive}

In this case the existence follows with similar methods as in the previous
case. We provide the results with only sketched proofs.

\begin{lemma}
\label{lemma_anticoercive_nonpos}We assume either that\newline
\textbf{B1} there exists a constant $\alpha >0$, such that $xf(k,x)\geq 0$
for all $k\in Z[1,N]$, and $\left\vert x\right\vert >\alpha $. \newline
or that\newline
\textbf{B2.1} there exist constants $M\geq 0,\alpha >\frac{1}{2}4^{n}\max
p(k)$ such that for all $|u|>M$ it holds $F(k,u)\geq \alpha u^{2}$,\newline
or that\newline
\textbf{B2.2} there exist constants $M\geq 0,\alpha \in 
%TCIMACRO{\U{211d} }%
%BeginExpansion
\mathbb{R}
%EndExpansion
$, $1\leq $ $q<2$ such that for all $|u|>M$ it holds $F(k,u)\geq \alpha
|u|^{q}$. \newline
Then $J$ is anticoercive and problem (\ref{rownanie wyjsciowe}) has at least
one solution.
\end{lemma}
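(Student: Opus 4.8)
The plan is to follow the template of Lemma~\ref{lemma_anticoercive}, since each of \textbf{B1}, \textbf{B2.1}, \textbf{B2.2} is precisely the sign-reversed counterpart of a hypothesis of Lemma~\ref{lemma_coercive}. First I would record the decomposition
\[
J(x)=\sum_{k=1-n}^{N}\frac{1}{2}p(k)(\Delta^{n}x(k))^{2}-\sum_{k=1}^{N}F(k,x(k)),
\]
observing that the boundary constraints force $x(k)=0$, hence $F(k,x(k))=0$, for $k\in Z[1-n,0]$, so only the interior indices feed the nonlinear part. The goal throughout is to bound $J$ from above by an expression tending to $-\infty$, i.e.\ to show that $-J$ is coercive.

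The decisive step, and the one whose sign differs from the nonnegative case, is the estimate of the quadratic form. Because $p(k)\le 0$ we have $p(k)(\Delta^{n}x(k))^{2}\le \max p\,(\Delta^{n}x(k))^{2}$ for each $k$; and since the multiplier $\max p\le 0$ is now non-positive, the two-sided inequality recalled above must be invoked through its \emph{lower} bound $\sum_{k=1-n}^{N}(\Delta^{n}x(k))^{2}\ge \lambda\|x\|^{2}$. This yields
\[
\sum_{k=1-n}^{N}\frac{1}{2}p(k)(\Delta^{n}x(k))^{2}\le \frac{1}{2}\lambda\,\max p\,\|x\|^{2},\qquad \tfrac{1}{2}\lambda\max p\le 0 .
\]
I expect this sign flip to be the only real obstacle: relative to Lemma~\ref{lemma_anticoercive} the constant governing the threshold becomes $\lambda$ rather than $4^{n}$, because for a non-positive form it is the lower spectral bound that controls the growth. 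Once this is in place the remaining estimates are routine comparisons of powers of $\|x\|$.

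I would then dispatch the three hypotheses in parallel. Under \textbf{B1}, running the argument of Lemma~\ref{lemma_coercive} with reversed inequalities shows each $F(k,\cdot)$ is bounded below: for $x>\alpha$ one has $f(k,x)\ge0$, so $F(k,\cdot)$ is nondecreasing; for $x<-\alpha$ one has $f(k,x)\le0$, so $F(k,\cdot)$ is nonincreasing; and continuity handles $[-\alpha,\alpha]$. Hence there is $m$ with $-F(k,x)\le m$, and $J(x)\le \tfrac{1}{2}\lambda\max p\,\|x\|^{2}+Nm\to-\infty$ provided $\max p<0$. Under \textbf{B2.1} the bound $F(k,u)\ge\alpha u^{2}$ gives $-\sum_{k=1}^{N}F(k,x(k))\le-\alpha\|x\|^{2}+C$, so that $J(x)\le(\tfrac{1}{2}\lambda\max p-\alpha)\|x\|^{2}+C$, which tends to $-\infty$ as soon as $\alpha>\tfrac{1}{2}\lambda\max p$. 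Under \textbf{B2.2} the bound $F(k,u)\ge\alpha|u|^{q}$ gives $-\sum_{k=1}^{N}F(k,x(k))\le-\alpha\|x\|_{q}^{q}+C$; invoking norm equivalence to write $\|x\|_{q}^{q}\le (C')^{q}\|x\|^{q}$ and using $q<2$, the negative quadratic term $\tfrac{1}{2}\lambda\max p\,\|x\|^{2}$ dominates the subquadratic contribution for every real $\alpha$, and again $J(x)\to-\infty$.

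Finally, anti-coercivity of $J$ makes $-J$ a continuous, coercive, $C^{1}$ functional, so the direct method (a continuous coercive functional attains its infimum, at which point its Gateaux derivative vanishes) produces $x_{0}$ with $(-J)'(x_{0})=\theta$, equivalently $J'(x_{0})=\theta$. Since critical points of $J$ are exactly the solutions of (\ref{rownanie wyjsciowe}), this $x_{0}$ is the desired solution.
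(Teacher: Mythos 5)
Your overall strategy is the same as the paper's: bound $J$ from above so that the (now non-positively weighted) quadratic term drives $J$ to $-\infty$, then apply the direct method to $-J$. The one genuine structural difference is in case \textbf{B1}: the paper does not rerun the monotonicity argument as you do, but instead writes $K=-J$ with $p'=-p\geq 0$ and $G=-F$, observes that \textbf{B1} for $(p,F)$ is exactly \textbf{A1} for $(p',G)$, and invokes Lemma~\ref{lemma_coercive} together with the fact that $J$ and $K$ have the same critical points. Your direct rerun is equally valid and makes visible the implicit requirement $\max p<0$ (without it, e.g. $p\equiv 0$, $f\equiv 0$, the functional is not anticoercive); the paper's reduction needs the same thing, since \textbf{A1} only yields coercivity when $\min(-p)>0$.

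In cases \textbf{B2.1}--\textbf{B2.2} your estimate of the quadratic form is the careful one, and it exposes a discrepancy with the statement. Since $\max p\leq 0$, multiplying $\lambda\Vert x\Vert^{2}\leq\sum_{k=1-n}^{N}(\Delta^{n}x(k))^{2}$ by $\tfrac12\max p$ gives
\[
\sum_{k=1-n}^{N}\tfrac12 p(k)(\Delta^{n}x(k))^{2}\leq \tfrac12\,\lambda\,\max p\,\Vert x\Vert^{2},
\]
exactly as you write, so your argument yields anticoercivity under \textbf{B2.1} only when $\alpha>\tfrac12\lambda\max p$. The lemma assumes the weaker $\alpha>\tfrac12 4^{n}\max p$ (weaker because $\lambda\leq 4^{n}$ and $\max p\leq 0$ give $4^{n}\max p\leq\lambda\max p$), so your proof does not cover the range $\tfrac12 4^{n}\max p<\alpha\leq\tfrac12\lambda\max p$. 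Be aware, however, that the paper's own proof handles this by asserting $\tfrac12\max p\sum(\Delta^{n}x(k))^{2}\leq\tfrac12\max p\,4^{n}\Vert x\Vert^{2}$, an inequality that reverses direction when $\max p<0$; the constant in \textbf{B2.1} appears to have been carried over from the nonnegative case without adjustment, and the version you prove (threshold $\tfrac12\lambda\max p$) is the one that actually follows from these estimates. For \textbf{B2.2} the threshold is immaterial because $q<2$, and there your treatment and the paper's agree.
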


\begin{proof}
We consider functional

\begin{equation*}
\begin{array}{l}
K(x)=-J(x)=-\dsum\limits_{k=1-n}^{N}\left[ \frac{1}{2}p(k)(\Delta
^{n}x(k))^{2}-F(k,x(k))\right] = \\ 
\dsum\limits_{k=1-n}^{N}\left[ \frac{1}{2}(-p(k))(\Delta
^{n}x(k))^{2}-(-F(k,x(k)))\right] .%
\end{array}%
\end{equation*}%
We see that $K$,\textbf{\ }with $p^{\prime }=-p$ and $G=-F$ satisfiy \textbf{%
A1}. Since in finite dimensional space every $x$ is a critical point of $J$
if and only if it is a critical point of $K$ we get assertion.

Assuming either \textbf{B2.1 }or\textbf{\ B2.2} we see that%
\begin{equation*}
\begin{array}{l}
\dsum\limits_{k=1-n}^{N}\left[ \frac{1}{2}p(k)(\Delta ^{n}x(k))^{2}-F(k,x(k))%
\right] \leq \frac{1}{2}\max p(k)\dsum\limits_{k=1-n}^{N}(\Delta
^{n}x(k))^{2}-\dsum\limits_{k=1-n}^{N}F(k,x(k))\bigskip \leq  \\ 
\frac{1}{2}\max p(k)4^{n}||x||^{2}-\dsum\limits_{k=1-n}^{N}F(k,x(k))\leq 
\frac{1}{2}\max p(k)4^{n}||x||^{2}-\alpha c^{q}||x||^{q}.%
\end{array}%
\end{equation*}%
When $1\leq q<2$ we see, that right side of inequality approaches $-\infty $%
. If $q=2$ and $\alpha >\frac{1}{2}\max p(k)4^{n}$, then%
\begin{equation*}
\frac{1}{2}\max p(k)4^{n}||x||^{2}-\alpha ||x||^{2}
\end{equation*}%
also approaches $-\infty $, since $\frac{1}{2}\max p(k)4^{n}-\alpha <0$.
\end{proof}

\begin{lemma}
\label{lemma_coercive_nonpos}We assume either that\newline
\textbf{B3.1} there exist constants $M\geq 0,\alpha <\frac{1}{2}\min
p(k)\lambda $ such that for all $|u|>M$ it holds $F(k,u)\leq \alpha u^{2}$%
\newline
or that\newline
\textbf{B3.2} there exist constants $M\geq 0,\alpha <0$, $q>2$ such that for
all $|u|>M$ it holds $F(k,u)\leq \alpha |u|^{q}$,\newline
Then $J$ is coercive and problem (\ref{rownanie wyjsciowe}) has at least one
solution.
\end{lemma}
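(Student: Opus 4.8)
The plan is to mirror the structure already used in Lemma \ref{lemma_anticoercive} and Lemma \ref{lemma_anticoercive_nonpos}, exploiting the duality $K=-J$. Under hypothesis \textbf{B3.1} or \textbf{B3.2} we have $p\leq 0$, so $-p\geq 0$, and the condition $F(k,u)\leq \alpha|u|^{q}$ (or $\leq\alpha u^{2}$) translates into a lower bound on $G(k,u)=-F(k,u)$. Thus the functional $K=-J$, written with $p'=-p\geq 0$ and $G=-F$, satisfies exactly the coercivity hypotheses \textbf{A2.1} or \textbf{A2.2} of Lemma \ref{lemma_coercive}. Since coercivity of $K$ is the same as anti-coercivity of $J=-K$, and conversely here we want $J$ itself coercive, the cleanest route is to verify coercivity of $J$ directly rather than through $K$.

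First I would record the two governing inequalities. Because $p(k)\leq 0$ for all $k$, we have $\tfrac12 p(k)(\Delta^{n}x(k))^{2}\geq \tfrac12\max p(k)\,(\Delta^{n}x(k))^{2}$, and then the upper estimate $\sum_{k=1-n}^{N}(\Delta^{n}x(k))^{2}\leq 4^{n}\|x\|^{2}$ from the cited inequality in \cite{Sol2n} gives a lower bound on the quadratic part. Wait — since $p\leq 0$ the quadratic term is nonpositive, so to bound $J$ from below I should instead use $\sum(\Delta^{n}x(k))^{2}\geq\lambda\|x\|^{2}$, yielding $\tfrac12 p(k)(\Delta^{n}x(k))^{2}\geq\tfrac12\min p(k)\sum(\Delta^{n}x(k))^{2}\geq\tfrac12\min p(k)\,4^{n}\|x\|^{2}$ when I pair the most-negative coefficient with the largest admissible value of the difference sum; this is the step requiring care about which extremal pairing preserves the inequality direction. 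Second, by \textbf{B3.2} with $q>2$ (or \textbf{B3.1} with $q=2$) and arguing as in Lemma \ref{lemma_coercive} that outside $|u|\leq M$ the bound $F(k,u)\leq\alpha|u|^{q}$ holds while on the compact set $|u|\leq M$ the continuous $F$ is bounded, I obtain $-\sum_{k=1-n}^{N}F(k,x(k))\geq -\alpha\sum|x(k)|^{q}-m=-\alpha\|x\|_{q}^{q}-m$ for some constant $m$.

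Combining the two estimates produces a lower bound of the form $J(x)\geq A\|x\|^{2}-\alpha\|x\|_{q}^{q}-m$ for a suitable constant $A$ and with $\alpha<0$. Here the anti-coercivity-to-coercivity sign flip is the crux: because $p$ is non-positive the quadratic contribution is the \emph{dominant negative-curvature} term, and coercivity of $J$ must come from the $-F$ part being strongly positive, which is precisely what $\alpha<0$ in \textbf{B3.1}/\textbf{B3.2} supplies. In case \textbf{B3.2}, $\alpha<0$ and $q>2$ make $-\alpha\|x\|_{q}^{q}\to+\infty$ faster than any quadratic term can pull down, so $J\to+\infty$. In case \textbf{B3.1}, $q=2$ and $\alpha<\tfrac12\min p(k)\lambda$ guarantee that after passing to the equivalent norm $\|x\|_{q}\leq C\|x\|$ the net coefficient of $\|x\|^{2}$ is strictly positive, again forcing coercivity. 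The main obstacle I expect is bookkeeping the sign conventions: with $p\leq 0$ one must consistently use $\min p(k)$ versus $\max p(k)$ in the right places and track that $-\alpha>0$, since a single misplaced inequality reverses coercivity into anti-coercivity. Once the estimate $J(x)\geq A\|x\|^{2}-\alpha C^{q}\|x\|^{q}-m\to+\infty$ is in hand, Lemma 5 (the direct-method lemma) yields a global minimizer $x_{0}$ with $J'(x_{0})=\theta$, and by Lemma 1 this minimizer is a solution of (\ref{rownanie wyjsciowe}).
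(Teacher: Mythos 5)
Your overall route is the same as the paper's: a direct lower estimate $J(x)\geq \frac{1}{2}\min p(k)\sum_{k}(\Delta^{n}x(k))^{2}-\sum_{k}F(k,x(k))$, followed by the growth hypothesis on $F$ and the direct method. The opening detour through $K=-J$ is abandoned and harmless (though note that an upper bound on $F$ becomes a \emph{lower} bound on $G=-F$, so $K$ would satisfy conditions of type \textbf{A3}, not \textbf{A2}). You are in fact more careful than the paper on the decisive sign: since $\min p(k)\leq 0$ in this section, the correct lower bound for the quadratic part pairs the negative coefficient with the \emph{upper} estimate $\sum_{k}(\Delta^{n}x(k))^{2}\leq 4^{n}\|x\|^{2}$, giving $\frac{1}{2}\min p(k)\sum_{k}(\Delta^{n}x(k))^{2}\geq \frac{1}{2}\min p(k)\,4^{n}\|x\|^{2}$; the paper instead writes $\geq \frac{1}{2}\min p(k)\lambda\|x\|^{2}$, which reverses an inequality when $\min p(k)<0$.

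Having made that correction, however, you do not close case \textbf{B3.1}. Your estimate yields $J(x)\geq\bigl(\frac{1}{2}\min p(k)4^{n}-\alpha\bigr)\|x\|^{2}-m$, and the stated hypothesis $\alpha<\frac{1}{2}\min p(k)\lambda$ does not make this coefficient positive: for $\min p(k)<0$ one has $\frac{1}{2}\min p(k)\lambda>\frac{1}{2}\min p(k)4^{n}$, so there remains a range $\alpha\in\bigl[\frac{1}{2}\min p(k)4^{n},\frac{1}{2}\min p(k)\lambda\bigr)$ on which your bound is inconclusive. Your closing claim for \textbf{B3.1} (``the net coefficient of $\|x\|^{2}$ is strictly positive'') therefore does not follow from your own, correctly derived, inequality; it would require the stronger hypothesis $\alpha<\frac{1}{2}\min p(k)4^{n}$. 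This gap is inherited from the lemma itself --- the paper's proof conceals it behind the reversed $\lambda$-inequality --- but a proof that is honest about the sign, as yours is, must either strengthen the hypothesis or supply a different argument. Case \textbf{B3.2} is fine: for $q>2$ and $\alpha<0$ the term $-\alpha\|x\|_{q}^{q}$ dominates any fixed multiple of $\|x\|^{2}$; only note that to bound $-\alpha\|x\|_{q}^{q}$ from below by a multiple of $\|x\|^{q}$ you need the lower equivalence constant ($\|x\|_{q}\geq c\|x\|$), not the upper one $C$ that you (and the paper) write.
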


\begin{proof}
\textbf{\ }Note that

\begin{equation*}
\begin{array}{l}
J(x)=\dsum\limits_{k=1-n}^{N}\left[ \frac{1}{2}p(k)(\Delta
^{n}x(k))^{2}-F(k,x(k))\right] \geq \dsum\limits_{k=1-n}^{N}\left[ \frac{1}{2%
}\min p(k)(\Delta ^{n}x(k))^{2}-F(k,x(k))\right]  \\ 
=\frac{1}{2}\min p(k)\dsum\limits_{k=1-n}^{N}(\Delta
^{n}x(k))^{2}-\dsum\limits_{k=1-n}^{N}F(k,x(k))\geq \frac{1}{2}\min
p(k)\lambda ||x||^{2}-\alpha C^{q}||x||^{q}%
\end{array}%
\end{equation*}

When $q>2$ then since $\alpha <0$, left side of inequality approaces $%
+\infty $. When $q=2$, and $\alpha <\frac{1}{2}\min p(k)\lambda ,$ it also
approaces $+\infty $.
\end{proof}

\section{Case of arbitrary $p$}

When function $p$ has arbitrary sign we may also use the arguments applied
before. For example by inequality

\begin{equation*}
\dsum\limits_{k=1-n}^{N}\left[ \frac{1}{2}p(k)(\Delta ^{n}x(k))^{2}-F(k,x(k))%
\right] \leq \dsum\limits_{k=1-n}^{N}\left[ \frac{1}{2}\max p(k)(\Delta
^{n}x(k))^{2}-F(k,x(k))\right]
\end{equation*}%
it follows with each of the assumptions \textbf{A3.1, A3.2, B2.1, B2.2 }that
functional\textbf{\ }$J$ is anti-coercive, and therefore (\ref{rownanie
wyjsciowe}) has a solution.

Similarly by inequality 
\begin{equation*}
\bigskip \dsum\limits_{k=1-n}^{N}\left[ \frac{1}{2}\min p(k)(\Delta
^{n}x(k))^{2}-F(k,x(k))\right] \leq \dsum\limits_{k=1-n}^{N}\left[ \frac{1}{2%
}p(k)(\Delta ^{n}x(k))^{2}-F(k,x(k))\right]
\end{equation*}%
it follows with each of the assumptions \textbf{A2.1, A2.2, B3.1, B3.2 }that%
\textbf{\ }functional\textbf{\ }$J$ is coercive, and therefore (\ref%
{rownanie wyjsciowe}) has a solution.

\bigskip

Moreover, for the purpose of the existence of at least one solution we may
use one of the following conidtions:

\textbf{C1} there exists $\alpha >\frac{1}{2}\max p(k)4^{n}$\ such that $%
\lim \inf_{|u|\rightarrow \infty }\frac{F(k,u)}{u^{2}}>\alpha $ uniformely
in $k\in Z[1,N]$,

\textbf{C2} there exists $\alpha >\frac{1}{2}\max p(k)4^{n}$\ such that $%
\lim \inf_{|u|\rightarrow \infty }\frac{F(k,u)}{u^{2}}=\alpha $, uniformely
in $k\in Z[1,N]$,

\textbf{C3} there exists $\alpha >\frac{1}{2}\max p(k)4^{n}$\ such that $%
\lim \inf_{|u|\rightarrow \infty }\frac{F(k,u)}{u^{2}}\geq \alpha $,
uniformely in $k\in Z[1,N]$,

\bigskip \textbf{C4} there exists $\alpha >\frac{1}{2}\max p(k)4^{n}$ such
that $\lim_{|u|\rightarrow \infty }(F(k,u)-\alpha u^{2})=\infty $,
uniformely in $k\in Z[1,N]$,

\textbf{D1} there exists $\alpha >0$, $q>2$\ such that $\lim
\inf_{|u|\rightarrow \infty }\frac{F(k,u)}{|u|^{q}}>\alpha $, uniformely in $%
k\in Z[1,N]$,

\textbf{D2} there exists $\alpha >0$, $q>2$\ such that $\lim
\inf_{|u|\rightarrow \infty }\frac{F(k,u)}{|u^{|q}}=\alpha $, uniformely in $%
k\in Z[1,N]$,

\textbf{D3} there exists $\alpha >0$, $q>2$\ such that $\lim
\inf_{|u|\rightarrow \infty }\frac{F(k,u)}{|u|^{q}}\geq \alpha $, uniformely
in $k\in Z[1,N]$,

\bigskip \textbf{D4} there exists $\alpha >0$, $q>2$ such that $%
\lim_{|u|\rightarrow \infty }(F(k,u)-\alpha |u|^{q})=\infty $, uniformely in 
$k\in Z[1,N]$,

\textbf{E1} \bigskip there exists $\alpha <\frac{1}{2}\min p(k)\lambda $\
such that $\limsup_{||u||\rightarrow \infty }\frac{F(k,u)}{u^{2}}<\alpha $,
uniformely in $k\in Z[1,N]$,

\textbf{E2} \bigskip there exists $\alpha <\frac{1}{2}\min p(k)\lambda $\
such that $\limsup_{||u||\rightarrow \infty }\frac{F(k,u)}{u^{2}}=\alpha $,
uniformely in $k\in Z[1,N]$,

\textbf{E3} \bigskip there exists $\alpha <\frac{1}{2}\min p(k)\lambda $\
such that $\limsup_{||u||\rightarrow \infty }\frac{F(k,u)}{u^{2}}\leq \alpha 
$, uniformely in $k\in Z[1,N]$,

\textbf{E4} there exists $\alpha <\frac{1}{2}\min p(k)\lambda $ such that $%
\lim_{|u|\rightarrow \infty }(F(k,u)-\alpha u^{2})=-\infty $, uniformely in $%
k\in Z[1,N]$,

\textbf{F1} \bigskip there exists $\alpha <0$, $q>2$ such that $%
\limsup_{||u||\rightarrow \infty }\frac{F(k,u)}{|u|^{q}}<\alpha $,
uniformely in $k\in Z[1,N]$,

\textbf{F2} there exists $\alpha <0$, $q>2$\ such that $\limsup_{||u||%
\rightarrow \infty }\frac{F(k,u)}{|u|^{q}}=\alpha $, uniformely in $k\in
Z[1,N]$,

\textbf{F3} \bigskip there exists $\alpha <0$, $q>2$ \ such that $%
\limsup_{||u||\rightarrow \infty }\frac{F(k,u)}{|u|^{q}}\leq \alpha $,
uniformely in $k\in Z[1,N]$,

\textbf{F4} there exists $\alpha <0$, $q>2$ such that $\lim_{|u|\rightarrow
\infty }$ \ $(F(k,u)-\alpha |u|^{q})=-\infty $, uniformely in $k\in Z[1,N]$.

We have the following

\begin{lemma}
Assume any of the above conditions. Then problem (\ref{rownanie wyjsciowe})
has at least one solution.
\end{lemma}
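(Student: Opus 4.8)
The plan is to show that each of the twelve conditions \textbf{C1}--\textbf{F4} is strong enough to imply one of the four asymptotic growth hypotheses already treated in Lemmas \ref{lemma_coercive}, \ref{lemma_anticoercive}, \ref{lemma_anticoercive_nonpos} and \ref{lemma_coercive_nonpos}, at least after discarding a bounded remainder. The key observation is that each condition controls the ratio $F(k,u)/u^{2}$ or $F(k,u)/|u|^{q}$ at infinity, and that a bound on a limit (inferior or superior) of such a ratio translates, for $|u|$ large, into a pointwise inequality of exactly the form $F(k,u)\geq \alpha u^{2}$, $F(k,u)\leq \alpha u^{2}$, $F(k,u)\geq \alpha|u|^{q}$ or $F(k,u)\leq \alpha|u|^{q}$ required by the earlier lemmas. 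Since $Z[1,N]$ is finite and $f$ is continuous, uniformity in $k$ is automatic, so no additional compactness argument is needed.

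First I would group the conditions by the lemma they feed into. Conditions \textbf{C1}--\textbf{C4} and \textbf{D1}--\textbf{D4} all force $F$ to grow at least quadratically (resp. at rate $q>2$) with a coefficient exceeding $\tfrac{1}{2}\max p(k)4^{n}$, hence yield anti-coercivity via \textbf{A3.1}/\textbf{A3.2} (the arbitrary-$p$ upper bound using $\max p(k)$); conversely \textbf{E1}--\textbf{E4} and \textbf{F1}--\textbf{F4} force $F$ to grow at most quadratically (resp. slower, with negative leading coefficient when $q>2$) with coefficient below $\tfrac{1}{2}\min p(k)\lambda$, hence yield coercivity via \textbf{A2.1}/\textbf{B3.2} using the $\min p(k)$ bound. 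For a representative case, say \textbf{C1}: from $\liminf_{|u|\to\infty}F(k,u)/u^{2}>\alpha$ I would pick $\alpha'$ with $\tfrac{1}{2}\max p(k)4^{n}<\alpha'<\alpha$ and an $M$ so that $F(k,u)\geq \alpha' u^{2}$ for all $|u|>M$ and all $k$, which is precisely \textbf{A3.1}. The strict-versus-nonstrict and $\liminf$-versus-limit distinctions across the four variants in each block are handled by the same $\varepsilon$-of-room argument, choosing an intermediate constant strictly on the correct side of the threshold $\tfrac{1}{2}\max p(k)4^{n}$ or $\tfrac{1}{2}\min p(k)\lambda$.

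The only genuinely different cases are the limit-type conditions \textbf{C4}, \textbf{D4}, \textbf{E4}, \textbf{F4}, where instead of a ratio bound one is told that $F(k,u)-\alpha u^{2}\to+\infty$ (or $\to-\infty$). Here I would not reduce to a pointwise power bound but argue directly: for \textbf{C4}, the hypothesis gives, for each constant $R$, an $M_R$ with $F(k,u)\geq \alpha u^{2}+R$ whenever $|u|>M_R$, so summing and invoking the inequality $\sum(\Delta^{n}x(k))^{2}\leq 4^{n}\|x\|^{2}$ yields $J(x)\leq (\tfrac{1}{2}\max p(k)4^{n}-\alpha)\|x\|^{2}+(\text{bounded terms})$ with a strictly negative leading coefficient, forcing $J(x)\to-\infty$; the \textbf{E4} case is symmetric using $\lambda\|x\|^{2}\leq\sum(\Delta^{n}x(k))^{2}$. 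I expect the main obstacle to be purely bookkeeping rather than conceptual: one must keep careful track, in each of the twelve cases, of which of $\min p(k)$ or $\max p(k)$ and which of the bounds $\lambda\|x\|^{2}$ or $4^{n}\|x\|^{2}$ in the Sol2n inequality gives the correct sign for the dominant term, since an inattentive choice (e.g. pairing a coercivity hypothesis with the $\max$ bound) produces an inequality in the wrong direction. Once the correct pairing is fixed, coercivity or anti-coercivity follows and Lemma \ref{lemma_coercive} (the direct method) delivers a critical point, hence by Lemma 1 a solution of (\ref{rownanie wyjsciowe}).
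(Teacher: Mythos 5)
Your proposal is correct and takes essentially the same approach as the paper: each of \textbf{C1}--\textbf{F4} is unpacked via the definition of $\liminf$/$\limsup$ (choosing an intermediate constant strictly on the correct side of the threshold $\frac{1}{2}\max p(k)4^{n}$ or $\frac{1}{2}\min p(k)\lambda$) into one of the pointwise conditions \textbf{A3.1}, \textbf{A3.2}, \textbf{B3.1}, \textbf{B3.2}, and the corresponding anti-coercivity or coercivity lemma then yields a critical point. The paper also reduces the limit-type cases \textbf{C4}, \textbf{D4}, \textbf{E4}, \textbf{F4} to the same pointwise bounds rather than estimating $J$ directly, but that is an immaterial difference, and your explicit intermediate-constant step for the ``$=\alpha$'' variants is if anything more careful than the paper's choice $\alpha'=\alpha/2$.
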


\begin{proof}
When \bigskip \textbf{C1} is satisfied we use the definition of the lower
limit in order to get%
\begin{equation*}
(\exists \Delta >0)(\forall u)(|u|>\Delta \Rightarrow \frac{F(k,u)}{u^{2}}%
>\alpha )
\end{equation*}%
and further%
\begin{equation*}
(\forall u)(|u|>\Delta \Rightarrow F(k,u)>\alpha u^{2})
\end{equation*}%
Thus there exist $\alpha >\frac{1}{2}\max p(k)4^{n}$, $\Delta >0$ such that
for $|u|>\Delta $, $F(k,u)>\alpha u^{2}$. So \textbf{A3.1 }holds and the
assertion follows by Lemma (\ref{lemma_anticoercive}).When \textbf{C2 }holds
it suffice to apply \textbf{C1} with $\alpha ^{\prime }=\frac{\alpha }{2}$. 
\textbf{C3 }follows by any of \textbf{C1} or \textbf{C2}. Assuming \bigskip 
\textbf{C4} we fix $M>0$ and find $\Delta >0$ such that for all $|u|>\Delta $
it holds $F(k,u)-\alpha u^{2}>M$. Thus there exist $\alpha >\frac{1}{2}\max
p(k)4^{n},\Delta $ such that $|u|>\Delta $, $F(k,u)>\alpha u^{2}+M$. So
again \textbf{A3.1 }holds and the assertion follows by Lemma (\ref%
{lemma_anticoercive}).

\bigskip

When \ \textbf{D1 }is satisfied we use the definition of lower limit in
order to get 
\begin{equation*}
(\exists \Delta >0)(\forall u)(|u|>\Delta \Rightarrow \frac{F(k,u)}{|u|^{q}}%
>\alpha )
\end{equation*}%
and then%
\begin{equation*}
(\forall u)(|u|>\Delta \Rightarrow F(k,u)>\alpha |u|^{q})
\end{equation*}%
Thus there exist $\alpha >0$, $\Delta >0$ such that for $|u|>\Delta $, $%
F(k,u)>\alpha |u|^{q}$ where $q>2$. Hence \textbf{A3.2 }holds and the
assertion follows by Lemma (\ref{lemma_anticoercive}) In similar way as in
previous case we obtain, that \textbf{D2, D3} provide coercitivity of $J.$%
Assuming \textbf{D4} we fix $M>0$ and find $\Delta >0$ such that for all $%
|u|>\Delta $ it holds $F(k,u)-\alpha u^{2}>M$. Thus there exist $\alpha >%
\frac{1}{2}\max p(k)4^{n},\Delta $ such that $|u|>\Delta $, $F(k,u)>\alpha
u^{2}+M$. So again \textbf{A3.2 }holds and the assertion follows by Lemma (%
\ref{lemma_anticoercive})

Similar as in previous cases we obtain that \textbf{\ }there exist $\alpha <%
\frac{1}{2}\lambda \min p(k)$, $\Delta >0$ such that for $|u|>\Delta $, $%
F(k,u)<\alpha u^{2}$. So \textbf{B3.1} holds and the assertion follows by
Lemma (\ref{lemma_coercive_nonpos})

\textbf{E4. }Fix $M<0$. Using definition of upper limit, there exists $%
\Delta >0$ such that for each $u$, if $|u|>\Delta $, then $F(k,u)-\alpha
u^{2}<M$. Therefore assumption \textbf{B3.1} is satisfied and functional\ J
is coercive.

When \textbf{F1} is satisfied we use definition of upper limit in order to
get%
\begin{equation*}
(\exists \Delta >0)(\forall u)(|u|>\Delta \Rightarrow \frac{F(k,u)}{|u|^{q}}%
<\alpha )
\end{equation*}

and then%
\begin{equation*}
(\forall u)(|u|>\Delta \Rightarrow F(k,u)<\alpha |u|^{q}
\end{equation*}%
So \textbf{B3.2 }is satisfied. To prove \textbf{F4} fix $M<0.$ Using the
definition of upper limit we obtain, that there exists $\Delta >0$ such that
for each $u$, if $|u|>\Delta $, then $F(k,u)<M+\alpha |u|^{q}$. Therefore
assumption \textbf{B3.2} is satisfied and functional $J$ is coercive.
\end{proof}

\section{Existence of a second solution}

In this section we follow the reasoning applied in \cite{Sol2n}. We have the
following theorem.

\begin{theorem}
\label{tw_2_rozw}Assume that any of the conditions \textbf{A3.1, A3.2, B2.1,
B2.2} holds\textbf{,} and that\textbf{\ }$\max_{k\in
Z[1,N]}\lim_{u\rightarrow 0}\frac{f(k,u)}{u}\leq c<\min p(k)\lambda $\textbf{%
. }Then(\ref{rownanie wyjsciowe}) has at least two solutions.
\end{theorem}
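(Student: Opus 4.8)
The plan is to obtain the second solution via the mountain pass lemma (Lemma given earlier in the excerpt), using the first solution produced by the coercivity/anti-coercivity arguments as one of the two distinguished points. Under any of \textbf{A3.1, A3.2, B2.1, B2.2} the functional $J$ is anti-coercive, so $-J$ is coercive and its global minimizer $x_{0}$ is a critical point of $J$ by the direct-method lemma; this is our first solution. To apply the mountain-pass variant (the $\sup$-$\min$ version, Lemma on the second variant of the mountain pass) we need a local structure at the origin that separates it from $x_{0}$ and from points far away, and we need the $(PS)$ condition. First I would verify that $\theta$ (the zero function) is itself a critical point: since $f$ is continuous and $\lim_{u\to 0}f(k,u)/u$ exists and is finite, we have $f(k,0)=0$, hence $F(k,0)=0$ and $J(\theta)=0$, and $\theta$ solves the problem trivially. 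So the genuine content is to produce a solution distinct from $\theta$ and from $x_{0}$.

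Next I would exploit the hypothesis $\max_{k}\lim_{u\to0}f(k,u)/u\le c<\min p(k)\lambda$ to show that $\theta$ is a strict local maximum of $J$ (equivalently a strict local minimum of $-J$, matching the anti-coercive setting). Concretely, the bound on $f(k,u)/u$ near $0$ gives, after integrating, an estimate $F(k,u)\ge \tfrac{1}{2}(c-\varepsilon)u^{2}$ for $|u|$ small with $c-\varepsilon$ still below $\min p(k)\lambda$; combined with the inequality $\sum_{k=1-n}^{N}(\Delta^{n}x(k))^{2}\le 4^{n}\|x\|^{2}$ and $\sum_{k=1-n}^{N}(\Delta^{n}x(k))^{2}\ge\lambda\|x\|^{2}$ quoted from \cite{Sol2n}, one gets
\begin{equation*}
J(x)\le \tfrac{1}{2}\max p(k)4^{n}\|x\|^{2}-\tfrac{1}{2}(c-\varepsilon)\lambda\|x\|^{2}+o(\|x\|^{2}),
\end{equation*}
so that on a small sphere $\partial\Omega=\{\|x\|=\rho\}$ we have $J(x)<0=J(\theta)$ uniformly, provided $\rho$ is small enough. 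This yields the separation $\min\{J(\theta),J(x_{0})\}>\sup_{\partial\Omega}J$ required by the second variant, once we also check $J(x_{0})\ge 0$; if instead $J(x_{0})<0$, then $x_{0}$ already lies in the region where $J$ is below the origin and one simply takes $x_{0}$ together with $\theta$ on opposite sides, choosing $\Omega$ so that $\theta\in\Omega$ and $x_{0}\notin\Omega$ (possible since $\|x_{0}\|$ can be taken large by anti-coercivity, or one rescales). The finite dimensionality of $E$ makes the geometry elementary.

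The verification of the $(PS)$ condition is where I would be careful, but it is short: in the finite-dimensional space $E$ any sequence $\{u_{n}\}$ with $\{J(u_{n})\}$ bounded must itself be bounded, because anti-coercivity of $J$ forces $\|u_{n}\|$ to stay bounded (otherwise $J(u_{n})\to-\infty$), and a bounded sequence in a finite-dimensional space has a convergent subsequence. Hence $(PS)$ holds automatically, and no gradient information is even needed. With $(PS)$ and the separating geometry in hand, the second-variant mountain pass lemma produces a critical point $x^{\ast}$ with $J(x^{\ast})=c=\sup_{h\in\Gamma}\min_{t}J(h(t))\ge \min\{J(\theta),J(x_{0})\}>\sup_{\partial\Omega}J$.

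The main obstacle is ensuring $x^{\ast}$ is genuinely a \emph{third} point distinct from both $\theta$ and $x_{0}$, and reconciling which of $\theta$, $x_{0}$ plays which role so that the critical level $c$ cannot coincide with a previously found critical value. I would argue that because $J(x^{\ast})\ge\min\{J(\theta),J(x_{0})\}$ while every point on the separating sphere has strictly smaller $J$-value, any minimax path must pass over the sphere, so $x^{\ast}\neq\theta$ whenever $J(x^{\ast})>J(\theta)$; the strict inequality in the hypothesis $c<\min p(k)\lambda$ is exactly what guarantees the strict local behaviour at $\theta$ and hence the strictness needed to separate the mountain-pass level from the trivial solution. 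Thus problem \eqref{rownanie wyjsciowe} has at least two solutions, and in fact the construction exhibits $\theta$, the extremal $x_{0}$, and the mountain-pass point, from which two nontrivial ones can be selected.
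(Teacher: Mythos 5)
Your overall strategy (anti-coercivity gives (PS) for free in finite dimensions, the global extremum supplies one critical point, and a mountain-pass argument at the origin supplies another) is the same as the paper's, but there is a genuine sign error at the heart of your local analysis at $\theta$, and it breaks the geometry. The hypothesis $\max_{k}\lim_{u\rightarrow 0}f(k,u)/u\leq c$ is an \emph{upper} bound on $f(k,u)/u$ near $0$; integrating $f(k,s)\leq (c+\varepsilon)s$ for small $s>0$ (and the reversed inequality for $s<0$) yields the \emph{upper} bound $F(k,u)\leq \frac{c+\varepsilon }{2}u^{2}$, not the lower bound $F(k,u)\geq \frac{1}{2}(c-\varepsilon )u^{2}$ that you claim. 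Consequently one gets a \emph{lower} bound on $J$ near the origin, namely $J(x)\geq \frac{1}{2}\lambda \min p(k)\,\|x\|^{2}-\frac{c+\varepsilon }{2}\|x\|^{2}>0$ on a small sphere (this is exactly the paper's computation, giving $J(x)\geq \frac{\min p(k)\lambda -c}{4}\|x\|^{2}$), so $\theta$ is a strict local \emph{minimum} of the anti-coercive $J$, not a strict local maximum. The correct tool is therefore the first ($\inf$--$\max$) variant of the mountain pass lemma with $\max \{J(\theta ),J(x_{0})\}<\inf_{\partial \Omega }J$, where $J(x_{0})<0$ is found far away by anti-coercivity; your $\sup$--$\min$ variant, together with the asserted $J<0$ on the small sphere, is the mirror image of what the hypothesis actually gives. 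Your displayed estimate is also internally inconsistent: a pointwise bound $F(k,u)\gtrless \beta u^{2}$ summed over $k$ produces $\beta \|x\|^{2}$ with no factor of $\lambda$, and the sign of $\frac{1}{2}\max p(k)4^{n}-\frac{1}{2}(c-\varepsilon )\lambda$ is not controlled by the assumption $c<\min p(k)\lambda$ at all.

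A secondary gap: you worry about distinguishing the mountain-pass point from $\theta$, but not from the global extremum $x_{0}$, and the two could a priori coincide (the mountain-pass level $\inf_{h}\max_{t}J(h(t))$ can equal $\max_{E}J$). The paper resolves precisely this degenerate case: if the mountain-pass point equals the global maximizer $x^{\ast}$, then every path in $\Gamma$ attains the value $\max_{E}J$, and since $E\setminus \{x^{\ast}\}$ is path-connected (here $N\geq 2$ is used), one can choose a path avoiding $x^{\ast}$ and thereby exhibit a second global maximizer, hence a second critical point. Your proposal would need either this argument or an explicit appeal to $\theta$ itself being a solution (which does follow from $f(k,0)=0$) to close the count of two solutions; as written, the distinctness bookkeeping is not complete.
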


\begin{proof}
Since $J$ is anti-coercive and since $E$ is finite dimensional, it follows
that $J$ satisfies the (PS) condition. Let $\varepsilon =\frac{\min
p(k)\lambda -c}{2}$. There exists $\delta >0$ such that $\ $for $|u|<\delta $
we have $\frac{f(k,u)}{u}\leq c+\varepsilon =\frac{\min p(k)\lambda +c}{2}.$
Hence for $|u|<\delta $, $\int_{0}^{u}f(k,s)ds\leq \frac{\min p(k)\lambda +c%
}{2}\int_{0}^{u}sds=\frac{\min p(k)\lambda +c}{4}u^{2}$ and

\begin{equation*}
\begin{array}{l}
J(x)\geq \frac{1}{2}\lambda ||x||^{2}\min\limits_{k\in
Z[1-n,N]}p(k)-\dsum\limits_{k=1-n}^{N}\frac{\min p(k)\lambda +c}{4}x(k)^{2}=
\\ 
\frac{1}{2}\lambda ||x||^{2}\min\limits_{k\in Z[1-n,N]}p(k)-\frac{\min
p(k)\lambda +c}{4}||x||^{2}= \\ 
(\frac{1}{2}\lambda \min\limits_{k\in Z[1-n,N]}p(k)-\frac{\min p(k)\lambda +c%
}{4})||x||^{2}=\frac{\min p(k)\lambda -c}{4}||x||^{2}>0%
\end{array}%
\end{equation*}

Therefore $J(x)\geq \frac{\min p(k)\lambda -c}{4}\delta ^{2}>0=J(\theta )$
for $x\in \partial \{x\in E:||x||<\delta \}$. Since $\lim_{||x||\rightarrow
\infty }J(x)\rightarrow -\infty $ we easily find $x_{0}\in \Omega =$ $%
E\backslash \{x\in E:||x||\leq \delta \}$ such that $J(x_{0})<0$. In a
consequence $\Omega $, $\theta $, $x$, $x_{0}$ satisfy the assumptions of
the mountain pass lemma. Thus there exists a critical point $\overline{x}$
such that $J(\overline{x})=\inf_{h\in \Gamma }\max_{t\in \lbrack
0,1]}J(h(t)) $. We know by anti-coercivity that there exists $x^{\ast }\in E$
such that $J(x^{\ast })=\max_{x\in E}J(x)$. When $x^{\ast }\neq \overline{x}$
we reach the assertion of the theorem.

Suppose that $x^{\ast }=\overline{x}$. It means that $J(x^{\ast
})=\inf_{h\in \Gamma }\max_{t\in \lbrack 0,1]}J(h(t)).$Hence for any
function $h\in \Gamma $, $\max_{t\in \lbrack 0,1]}J(h(t))=J(x^{\ast })$.
Indeed, for any $h\in \Gamma $ we have$\ J(x^{\ast })\geq $ $\max_{t\in
\lbrack 0,1]}J(h(t))$ since $J(x^{\ast })=\max_{x\in E}J(x)$ and $J(x^{\ast
})\leq $ $\max_{t\in \lbrack 0,1]}J(h(t)$ by definition of the infimum.
Since $N>1$, the space $E\backslash \{x^{\ast }\}$ is path-connected being
homeomorphic with $%
%TCIMACRO{\U{211d} }%
%BeginExpansion
\mathbb{R}
%EndExpansion
^{N}\backslash \{c\}$, $c\in 
%TCIMACRO{\U{211d} }%
%BeginExpansion
\mathbb{R}
%EndExpansion
^{N}$. Hence, there exists a function $h_{0}\in \Gamma $ such that $%
h_{0}(t)\neq x^{\ast }$ for $t\in \lbrack 0,1]$. Since $\max_{t\in \lbrack
0,1]}J(h_{0}(t))=J(x^{\ast })$\ it follows that there exists $t_{0}\in (0,1)$%
\ such that $J(h_{0}(t_{0}))=\max_{x\in E}J(x)$\ and $h_{0}(t_{0})\neq
x^{\ast }$\ by the definition of $h_{0}$. Thus $h_{0}(t_{0})$\ is a critical
point different than $x^{\ast }$.
\end{proof}

\bigskip

Using the second variant of the mountain pass theorem and using the
methodology employed in the proof of Theorem (\ref{tw_2_rozw}) we reach the
following result.

\begin{theorem}
\bigskip Assume\ that any of the conditions \textbf{B3.1, B3.2, A2.1, A2.2}
holds\textbf{,} and that\textbf{\ }$\min_{k\in Z[1,...N]}\lim_{u\rightarrow
0}\frac{f(k,u)}{u}\geq c>\max p(k)4^{n}$\textbf{\ }Then(\ref{rownanie
wyjsciowe}) has at least two solutions.

\begin{proof}
Taking $\varepsilon =\frac{c-\max p(k)}{2}$ we obtain, that there exists $%
\delta >0$ such that $\ $for $|u|<\delta $ we have $\frac{f(k,u)}{u}\geq
c+\varepsilon =\frac{\max p(k)4^{n}+c}{2}.$ Hence for $|u|<\delta $, $%
\int_{0}^{u}f(k,s)ds\leq \frac{\max p(k)4^{n}+c}{2}\int_{0}^{u}sds=\frac{%
\max p(k)4^{n}+c}{4}u^{2}$ and%
\begin{equation*}
J(x)\leq ||x||^{2}\cdot \frac{\max p(k)4^{n}-c}{4}<0
\end{equation*}%
For $x\in \partial \{x\in E:||x||<\delta \}$. Since $\lim_{||x||\rightarrow
\infty }J(x)\rightarrow +\infty $ we easily find $x_{0}\in \Omega =$ $%
E\backslash \{x\in E:||x||\leq \delta \}$ such that $J(x_{0})>0$. In a
consequence $\Omega $, $\theta $, $x$, $x_{0}$ satisfy the assumptions of
the second variant of mountain pass lemma. Thus there exists a critical
point $\overline{x}$ such that $J(\overline{x})=\sup_{h\in \Gamma
}\min_{t\in \lbrack 0,1]}J(h(t))$. We know by coercivity that there exists $%
x^{\ast }\in E$ such that $J(x^{\ast })=\min_{x\in E}J(x)$. Using the same
reasoning as in previous case, we obtain existence of second solution.
\end{proof}
\end{theorem}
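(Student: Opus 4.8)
The plan is to run the argument of Theorem~\ref{tw_2_rozw} in its mirror image: coercivity replaces anti-coercivity, a global \emph{minimum} replaces the global maximum, and the second variant of the mountain-pass lemma replaces the first.

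First I would note that each of the hypotheses \textbf{B3.1, B3.2, A2.1, A2.2} makes $J$ coercive, by Lemmas~\ref{lemma_coercive_nonpos} and \ref{lemma_coercive}. As $E$ is finite-dimensional, coercivity of the $C^{1}$ functional $J$ gives the (PS) condition at once: any sequence on which $J$ is bounded is itself bounded and therefore has a convergent subsequence. Coercivity, via the minimization lemma for continuous coercive functionals, also furnishes a global minimizer $x^{\ast }$ with $J(x^{\ast })=\min_{x\in E}J(x)$.

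Second, I would convert the behaviour of $f$ near the origin into a sphere on which $J$ is strictly negative. Putting $\varepsilon =\frac{c-\max p(k)4^{n}}{2}>0$, the assumption $\min_{k}\lim_{u\rightarrow 0}\frac{f(k,u)}{u}\geq c$ yields $\delta >0$ with $\frac{f(k,u)}{u}\geq c-\varepsilon =\frac{c+\max p(k)4^{n}}{2}$ for $0<|u|<\delta$, uniformly in $k$. Integrating gives $F(k,u)\geq \frac{c+\max p(k)4^{n}}{4}u^{2}$ on $(-\delta ,\delta )$, and combining this with the upper estimate $\sum_{k=1-n}^{N}(\Delta ^{n}x(k))^{2}\leq 4^{n}||x||^{2}$ borrowed from \cite{Sol2n} produces, for $||x||=\delta$,
\[
J(x)\leq \frac{\max p(k)4^{n}-c}{4}||x||^{2}<0=J(\theta ).
\]
Since $J$ is coercive I may then select $x_{0}$ outside the closed ball $\{||x||\leq \delta \}$ with $J(x_{0})>0$; with $\Omega =E\backslash \{x\in E:||x||\leq \delta \}$ the data $\theta ,x_{0},\partial \Omega$ satisfy $\min \{J(\theta ),J(x_{0})\}>0>\sup_{x\in \partial \Omega }J(x)$, which is precisely the hypothesis of the second variant of the mountain-pass lemma. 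This delivers a critical point $\overline{x}$ at level $\sup_{h\in \Gamma }\min_{t\in [0,1]}J(h(t))$.

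Finally I would show that the minimizer $x^{\ast }$ and the mountain-pass point $\overline{x}$ are distinct, which is the only delicate point. If $x^{\ast }\neq \overline{x}$ there is nothing more to prove. Otherwise $J(x^{\ast })=\sup_{h}\min_{t}J(h(t))$ forces $\min_{t}J(h(t))=\min_{x\in E}J(x)$ for every $h\in \Gamma$; choosing, exactly as in Theorem~\ref{tw_2_rozw} (here $N>1$ makes $E\backslash \{x^{\ast }\}$ path-connected, being homeomorphic to $\mathbb{R}^{N}$ with one point removed), a path $h_{0}\in \Gamma$ that avoids $x^{\ast }$, I obtain an interior $t_{0}$ with $J(h_{0}(t_{0}))=\min_{x\in E}J(x)$ and $h_{0}(t_{0})\neq x^{\ast }$, hence a second critical point. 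The main obstacle is making this dichotomy airtight: one must check that the sup-min level is actually realized at such an interior $t_{0}$, that the value there equals the global minimum, and that a point realizing $\min_{x\in E}J$ is genuinely critical — it is the finite dimension of $E$ together with the topological fact $N>1$ that make all three checks go through.
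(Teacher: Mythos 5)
Your proposal is correct and follows essentially the same route as the paper: coercivity from \textbf{B3.1, B3.2, A2.1, A2.2}, a sphere around the origin on which $J<0$ obtained from the behaviour of $f$ near $0$ together with the bound $\sum(\Delta^{n}x(k))^{2}\leq 4^{n}\|x\|^{2}$, the second (sup--min) variant of the mountain-pass lemma, and the path-avoidance argument of Theorem \ref{tw_2_rozw} to separate the global minimizer from the mountain-pass point. In fact you silently correct the paper's sign typos (one needs $c-\varepsilon$ rather than $c+\varepsilon$, and $F(k,u)\geq\cdots$ rather than $\leq$), and your only slip is the claim $\min\{J(\theta),J(x_{0})\}>0$, which should read $=0$ since $J(\theta)=0$; the inequality $\min\{J(\theta),J(x_{0})\}>\sup_{x\in\partial\Omega}J(x)$ that the lemma actually requires is unaffected.
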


\end{document}